\newtheorem{preproof}{{\bf \indent Proof.}}
\newenvironment{proof}[1]{\begin{preproof}{\rm
               #1}\hfill{$\Box$}}{\end{preproof}}
\newtheorem{cor}{\bf\indent Corollary}[section]
\newtheorem{example}{\bf\indent Example}[section]
\newtheorem{thm}{{\bf\indent Theorem}}[section]
\newtheorem{prop}{\bf\indent Proposition}[section]
\newtheorem{remark}{\bf\indent Remark}[section]
\newtheorem{lem}{\bf\indent Lemma}[section]
\title{\bf \large  Strong resolving graph of \\the intersection graph in commutative rings\thanks
{{\it Key Words}: Strong resolving graph, Strong metric dimension, Intersection graph of ideals, Commutative ring.\newline
{\indent{~~2020 {\it Mathematics Subject Classification}: 13A99; 05C78; 05C12.}}}}
\author{{\normalsize  {\sc E. Dodongeh${}^{\mathsf{a}}$,  A. Moussavi${}^{\mathsf{a}}$,     R. Nikandish${}^{\mathsf{b}}$\thanks{Corresponding author}}
}\vspace{3mm}\\
{\footnotesize{${}^{\mathsf{a}}$\it Department of Mathematics, University of Tarbiat Modarres,
Tehran, Iran}}\\
{\footnotesize{${}^{\mathsf{b}}$\it  Department of Mathematics,
Jundi-Shapur University of Technology,  Dezful,
Iran}}\\
{\footnotesize{${}^{\mathsf{}}$\it}}\\
{\footnotesize{$\mathsf{e.dodongeh@modares.ac.ir}$\quad\quad $\mathsf{moussavi.a@modares.ac.ir}$ \quad\quad
$\mathsf{r.nikandish@ipm.ir}$\quad\quad}}}
\begin{document}

\maketitle

\begin{abstract}
{\small The intersection graph of ideals associated with a commutative unitary ring $R$ is the graph $G(R)$ whose vertices all non-trivial ideals of $R$ and there exists an edge between distinct vertices if and only if the intersection of them is non-zero. In this paper, the structure of the resolving graph of $G(R)$ is characterized and as an application, we evaluate the strong metric dimension of $G(R)$.}
\end{abstract}
\begin{center}\section{Introduction}\end{center}
Metric and strong metric dimension of a graph are two of the most applicable parameters with several usages in robotic, computer science, chemistry, optimization  etc. Although these invariants  have been computed for some classes of well-known graphs, they are still the subjects  of many researches; for instance see \cite{abr, bai, ghala, Ji, Ve}. Among the reasons for considerable interest  in characterizing these parameters for graphs  associated with algebraic structures one may cite variety of uses and complexity of computations. Some
examples in this direction may be found in \cite{bakht, ali, dol, dol2,  ma, nili, nili2, Pirzada1, Pirzada2, zai}.  This paper has a such goal and aims to discuss the strong metric dimension in intersection graphs of ideals of commutative rings.
\par
 For graph theory terminology, we follow \cite{west}. Let $G=(V,E)$ be a graph with $V=V(G)$ as the vertex set and $E=E(G)$ as the edge set. A complete graph of order $n$  is denoted by $K_n$. Also, distance between two distinct vertices $x$ and $y$ is denoted by $d(x,y)$. By diam$(G)$, we mean the diameter of $G$.  Moreover, the induced subgraph by $V_0\subseteq V$ is  denoted by $G[V_0]$. The open and closed neighborhood of the vertex $x$ are denoted by  $N(x)$ and $N[x]$, respectively. The complement of $G$ is denoted by $\overline{G}$. The independence number and vertex cover number of the graph $G$ are denoted by $\beta(G)$ and  $\alpha(G)$, respectively.
 Let $S=\{v_1,v_2,\dots,v_k\}$ be an ordered subset of $V$ and $v\in V\setminus S$. Then the representation vector of  $v$ with respect to $S$ is denoted by $D(v|S)$ which is defined as follows: $D(v|S)=(d(v,v_1),d(v,v_2),\dots, d(v,v_k))$.  An ordered subset $S\subseteq V(G)$ is called \textit{resolving} provided that distinct vertices out of $S$ have different representation vectors with respect to $S$.  The cardinality of any
resolving set of minimum cardinality is called the \textit{metric dimension of} $G$ and denoted   by $dim_M(G)$.
 Two different vertices $u,v$ \textit{are mutually maximally distant} if $d(v, w) \leq d(u, v)$, for every $w \in N(u)$ and $ d(u, w) \leq d(u, v)$, for every $w \in N(v)$. For a graph $G$, \textit{the strong resolving graph of} $G$, is denoted by $G_{SR}$ and its vertex and edge set are defined as follow:  
$V(G_{SR})= \lbrace u \in V (G)|\,there~exists ~v \in V (G)  ~such~that ~u,  v ~are ~mutually ~maximally ~distant \rbrace$ and  $uv \in E(G_{SR})$  if  and  only  if $u$  and $v$ are  mutually maximally distant. Two vertices $u$ and $v$ are \textit{strongly resolved} by some vertex $w$ if either 
$d(w, u)$ is  equal to $d(w, v) + d(v, u)$  or $d(w, v)$ is equal to $d(w, u) + d(v, u)$. A set $W$ of vertices
is a \textit{strong resolving set of} $G$ if every two distinct vertices of $G$ are strongly
resolved by some vertex of $W$ and a minimum strong resolving set is called
 \textit{strong metric basis} and its cardinality is \textit{the strong metric dimension of} 
$G$. We denote the strong metric dimension of $G$, by $sdim(G)$.

\par
Throughout this paper, all rings are assumed to be commutative with identity. The set of all non-trivial ideals of $R$ is denoted by  $I(R)$. The ring $R$ is called  \textit{reduced} if it has no nilpotent elements other than $0_R$. For undefined notions in ring theory, we refer the reader to \cite{ati}.
\par
\textit{The intersection graph of ideals of a  ring} $R$, denoted by $G(R)$, is a simple and undirected graph whose vertex set is $I(R)$ and two distinct vertices are adjacent if and only if they have non-zero intersection. This graph was first introduced and studied by Chakrabarty et.al in  \cite{Chak} and many beautiful  properties  of it were obtained. Later, many researchers investigated different aspects of this concept; see for instance \cite{akb, mah, xu}. In \cite{nik}, the metric dimension of intersection graphs of rings was discussed.
In this paper, we characterize the structure of the resolving graph of $G(R)$ and as an application $sdim(G(R))$ is computed.
\section{$G(R)_{SR}$ and $sdim(G(R))$; $R$ is reduced}
 \noindent
 In this section, for a given ring $R$, first it is shown that $sdim_M(G(R))$ is finite if and only if $|I(R)|< \infty$. Then the graph $G(R)_{SR}$ and its vertex cover number are determined, when $R$ is reduced. Finally, $sdim(G(R))$ is given in an explicit formula.

\begin{prop}\label{dimfinite}
Let $R$ be a ring that is not a field. Then $sdim_{M}(G(R))< \infty$ if and only if $|I(R)|< \infty$.
\end{prop}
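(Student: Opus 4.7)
The plan is to handle the two implications separately. The easy direction is $|I(R)|<\infty \Rightarrow sdim_{M}(G(R))<\infty$: if $V(G(R))=I(R)$ is finite, then $V(G(R))$ itself is a strong resolving set, because every pair $\{u,v\}$ of distinct vertices is strongly resolved by $w=u$ (the equation $d(u,v)=d(u,u)+d(u,v)$ holds trivially), and hence $sdim_{M}(G(R))\le |I(R)|<\infty$.

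For the nontrivial direction I would contrapose and assume $|I(R)|=\infty$. My first reduction is to observe that every strong resolving set is automatically a metric resolving set: strongly resolving distinct $u,v$ by a vertex $w$ forces $d(w,u)\neq d(w,v)$, since the defining equations give $|d(w,u)-d(w,v)|=d(u,v)>0$. So $dim_{M}(G(R))\le sdim_{M}(G(R))$, and it suffices to prove $dim_{M}(G(R))=\infty$. My second reduction uses the diameter result of Chakrabarty et al.\ \cite{Chak}: $G(R)$ is connected with $\mathrm{diam}(G(R))\le 2$ whenever $R$ is not a direct product of two fields. Since any finite direct product of fields has only finitely many ideals, the hypothesis $|I(R)|=\infty$ excludes this pathology and delivers the diameter bound.

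With the diameter of $G(R)$ bounded by $2$, the rest is a pigeonhole argument. For any finite $W\subseteq V(G(R))$ of size $k$, each representation vector $D(v|W)$ lies in the finite set $\{0,1,2\}^{k}$, whereas $V(G(R))\setminus W$ is still infinite, so two distinct vertices $u,v\notin W$ must satisfy $D(u|W)=D(v|W)$, showing that $W$ fails to be a resolving set. Since this holds for every finite $W$, we conclude $dim_{M}(G(R))=\infty$ and therefore $sdim_{M}(G(R))=\infty$. The main obstacle is ensuring that the diameter bound actually applies; once the preliminary ring-theoretic observation eliminates the direct-product-of-fields case, the remainder is a clean counting argument, and no structural analysis of $G(R)_{SR}$ is needed at this stage.
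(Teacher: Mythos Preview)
Your proof is correct and follows essentially the same strategy as the paper: reduce to the ordinary metric dimension via $dim_M(G(R))\le sdim_M(G(R))$, then invoke the diameter bound $\mathrm{diam}(G(R))\le 2$ and pigeonhole to force $|I(R)|<\infty$. The paper cites \cite{akb} rather than \cite{Chak} for the diameter input (phrased there as ``$2^n$ possibilities for $D(X|W)$''), and argues the hard direction directly rather than by contraposition, but the content is the same.
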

\begin{proof}
{First assume that $sdim_M(G(R))$ is finite. Then $dim_M(G(R))$ is finite too, as $dim_M(G(R))\leq sdim_M(G(R))$. Let $W=\{W_1,\ldots,W_n\}$ be a  metric basis for $G(R)$, where $n$ is a non-negative integer. By \cite[Theorem  2.1]{akb}, there exist $2^n$ possibilities for $D(X|W)$, for every $X \in V(G(R))\setminus W$. Thus $|V(G(R)| \leq 2^{n}+n$ and hence $R$ has finitely many ideals. The converse is trivial.}
\end{proof}

To compute  $sdim_M(G(R))$,  it is enough to consider rings with finitely many ideals, by Proposition \ref{dimfinite}. Therefore, from now on, we suppose that all rings $R$ have finitely many ideals. These rings are direct product of finitely many fields, if they are reduced.

We state a series of lemmas to calculate $sdim(G(R))$.

\begin{lem}\label{Oellermann} {\rm (\cite[Theorem 2.1]{oller})} For any connected graph $G$, $sdim_M(G)=\alpha(G_{SR})$.
\end{lem}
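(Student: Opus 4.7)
The plan is to prove the two inequalities $sdim_M(G)\geq\alpha(G_{SR})$ and $sdim_M(G)\leq\alpha(G_{SR})$ separately; both rest on the following observation (O): \emph{if $u$ and $v$ are mutually maximally distant vertices of $G$ and $w$ strongly resolves $u$ and $v$, then $w\in\{u,v\}$.} Indeed, assume without loss of generality that $d(w,u)=d(w,v)+d(v,u)$, so $v$ lies on a shortest $w$--$u$ path; if $w\ne v$, then the neighbor $w'$ of $v$ immediately preceding $v$ on this path satisfies $d(w',u)=d(v,u)+1$, which contradicts the requirement $d(u,w')\le d(u,v)$ imposed on $N(v)$ by the mutually maximally distant condition.

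For the lower bound, let $W$ be a strong metric basis of $G$. For any edge $uv$ of $G_{SR}$, the endpoints are mutually maximally distant and must be strongly resolved by some $w\in W$; by (O) this $w$ lies in $\{u,v\}$. Hence $W\cap V(G_{SR})$ is a vertex cover of $G_{SR}$, giving $sdim_M(G)=|W|\ge \alpha(G_{SR})$.

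For the reverse inequality, I would show that every vertex cover $C$ of $G_{SR}$ is already a strong resolving set of $G$. Fix distinct $x,y\in V(G)$; it suffices to manufacture a mutually maximally distant pair $u,v$ such that \emph{both} $u$ and $v$ strongly resolve $x,y$, since then the covered edge $uv\in E(G_{SR})$ contributes a resolver in $C$. Starting from $v=x$, repeatedly replace $v$ by any neighbor farther from $y$; the invariant ``$x$ lies on a shortest $y$--$v$ path'' is preserved by each such step and, the graph being finite, the process terminates with $v$ maximally distant from $y$. Then starting from $u=y$, iterate the analogous extension moving away from $v$, producing $u$ maximally distant from $v$ such that $y$ lies on a shortest $v$--$u$ path. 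A one-line triangle inequality using ``$v$ maximally distant from $y$'' together with ``$y$ on a shortest $v$--$u$ path'' upgrades $v$ to being maximally distant from $u$ as well, so $u,v$ are mutually maximally distant. Concatenating the two geodesic pieces yields a shortest $v$--$u$ path passing successively through $x$ and $y$, whence $d(u,x)=d(u,y)+d(y,x)$ and $d(v,y)=d(v,x)+d(x,y)$; both endpoints therefore strongly resolve $x,y$, as needed.

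The main obstacle is arranging the two-sided extension symmetrically: extending only one endpoint yields a single vertex that strongly resolves $x,y$ but gives no control over the partner required to form an edge of $G_{SR}$. The two extensions must be directed toward opposite ends of the path through $x$ and $y$, so that $x$ and $y$ both survive as interior vertices of the final $u$--$v$ geodesic. Once that geometric configuration is secured, the remaining verifications reduce to routine triangle inequality manipulations.
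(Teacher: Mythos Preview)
The paper does not supply a proof of this lemma at all: it is quoted verbatim as \cite[Theorem 2.1]{oller} and used as a black box throughout. So there is nothing in the paper to compare your argument against line by line.

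That said, your proof is correct and is essentially the original argument of Oellermann and Peters-Fransen. The observation (O) is exactly right, and your verification is clean: if $v$ lies strictly between $w$ and $u$ on a geodesic, the neighbor of $v$ one step toward $w$ is forced to be at distance $d(u,v)+1$ from $u$, violating maximal distance. The two-stage extension (first push $v$ away from $y$ through $x$, then push $u$ away from $v$ through $y$) is also the standard trick, and your justification that $v$ remains maximally distant from the new endpoint $u$ via $d(u,w)\le d(u,y)+d(y,w)\le d(u,y)+d(y,v)=d(u,v)$ for $w\in N(v)$ is precisely the ``one-line triangle inequality'' you allude to. The concatenated geodesic $v\text{--}x\text{--}y\text{--}u$ then has length $d(v,x)+d(x,y)+d(y,u)=d(v,y)+d(y,u)=d(v,u)$, so both $x$ and $y$ sit on it and either endpoint strongly resolves the pair. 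One small point worth making explicit in a final write-up: the process terminates with $u\ne v$ because $d(u,v)\ge d(y,v)\ge d(x,y)\ge 1$, so $uv$ is a genuine edge of $G_{SR}$.
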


\begin{lem}\label{Gallai}
{\rm (Gallai$^{^,}$s theorem)} For any graph $G$ of order $n$, $\alpha(G)+\beta(G)=n$.
\end{lem}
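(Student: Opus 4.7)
The plan is to exploit the standard duality between vertex covers and independent sets via complementation. The key observation is that for any $S \subseteq V(G)$, the set $S$ is independent in $G$ if and only if $V(G) \setminus S$ is a vertex cover of $G$. This follows directly from the definitions: if $uv \in E(G)$ then an independent $S$ cannot contain both endpoints $u$ and $v$, so at least one of them must lie in $V(G) \setminus S$, which is precisely the vertex cover condition; the converse direction is just the contrapositive.

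Given this complementation bijection between independent sets and vertex covers, the theorem splits into two short inequalities. First, I would pick a minimum vertex cover $C$ with $|C| = \alpha(G)$; then $V(G) \setminus C$ is an independent set of cardinality $n - \alpha(G)$, so by maximality of $\beta(G)$ we get $\beta(G) \geq n - \alpha(G)$. Second, I would pick a maximum independent set $I$ with $|I| = \beta(G)$; then $V(G) \setminus I$ is a vertex cover of cardinality $n - \beta(G)$, so by minimality of $\alpha(G)$ we get $\alpha(G) \leq n - \beta(G)$. Rearranging and combining these two inequalities gives $\alpha(G) + \beta(G) = n$.

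There is essentially no obstacle here, since Gallai's theorem is a classical textbook result and the entire argument rests on the one-line observation that complementation swaps independent sets with vertex covers. The only care required is to keep the notational convention of the paper straight, namely that $\alpha(G)$ denotes the vertex cover number and $\beta(G)$ the independence number, so that the inequalities point in the correct directions when combined.
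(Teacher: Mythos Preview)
Your argument is correct and is the standard textbook proof of Gallai's theorem, with the complementation bijection between independent sets and vertex covers handled cleanly and the paper's convention (that $\alpha(G)$ is the vertex cover number and $\beta(G)$ the independence number) respected. The paper itself offers no proof: it simply records Gallai's theorem as a classical named result and invokes it later, so there is no in-paper argument to compare against; your write-up supplies exactly the proof one would expect.
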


The following remark introduces a notion which will be used several times.
\begin{remark}
	\label{dimfin}
	{\rm Let $R \cong \prod_{i=1}^{n}R_{i}$, where $R_{i}$ is a ring for every $1\leq i \leq n$, and $I=I_{1}\times \cdots \times I_{n} \in V(G(R))$. Then by $I^{c}=I_{1}^{c}\times \cdots \times I_{n}^{c}$, we mean a vertex of $G(R)$ such that $I_{i}^{c}= 0$ if $I_{i}\neq 0$ and $I_{i}^{c} =R_i$ if  $I_{i}= 0$, for every $1\leq i \leq n$. The ideal $I^{c}$ is called the complement of $I$. We note that different ideals may have a same complement.}
\end{remark}

\begin{lem}\label{lemma2g}
Let  $n\geq 2$ be a positive integer and $R\cong \prod_{i=1}^{n}\mathbb{F}_{i}$, where $\mathbb{F}_{i}$ is a field for every $1\leq i \leq n$. Then the following statements hold.\\
$1)$ $V(G(R)_{SR})=V(G(R))$.\\
$2)$ Suppose that $I, J \in V(G(R)_{SR})$, then  $IJ \in E(G(R)_{SR})$ if and only if $IJ \notin E(G(R))$.
\end{lem}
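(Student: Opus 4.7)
The plan is to identify each non-trivial ideal $I = I_1 \times \cdots \times I_n$ with its support $S(I) = \{i : I_i \neq 0\}$, a non-empty proper subset of $[n] := \{1,\ldots,n\}$; then two ideals are adjacent in $G(R)$ iff their supports meet, and the complement $I^c$ from Remark~\ref{dimfin} corresponds to $S(I^c) = [n] \setminus S(I)$. The rest of the argument becomes combinatorics on subsets.

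First I would establish that $\mathrm{diam}(G(R)) \leq 2$ whenever $n \geq 3$: given non-adjacent $I, J$ (disjoint supports), pick $i \in S(I)$ and $j \in S(J)$ and take $K$ with $S(K) = \{i,j\}$; since $n \geq 3$, $S(K)$ is a non-empty proper subset of $[n]$, so $K$ is a genuine vertex adjacent to both $I$ and $J$. The case $n = 2$ yields an edgeless two-vertex graph, which I would dispose of separately by invoking vacuously true MMD conditions.

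For part $(1)$, I would observe that for every vertex $I$ the complement $I^c$ is itself a vertex (its support is non-empty and proper) and has support disjoint from $S(I)$. Hence $d(I, I^c) \geq 2$, and by the diameter bound every $w \in N(I)$ satisfies $d(I^c, w) \leq 2 \leq d(I, I^c)$, with the symmetric inequality for $N(I^c)$. Thus each vertex is mutually maximally distant with its complement, proving $V(G(R)_{SR}) = V(G(R))$.

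For part $(2)$, I would split on whether $I, J$ are adjacent in $G(R)$. If they are adjacent and distinct, then without loss of generality $S(I) \setminus S(J) \neq \emptyset$; fixing $i$ in this set and letting $K$ be the vertex with support $\{i\}$ (valid because $n \geq 2$), one gets $K \in N(I)$ while $K \neq J$ and $K \notin N(J)$, so $d(J,K) \geq 2 > 1 = d(I,J)$, violating MMD and giving $IJ \notin E(G(R)_{SR})$. Conversely, if $I, J$ are non-adjacent then $d(I,J) \geq 2$, and the diameter bound immediately yields $d(J,w) \leq 2 \leq d(I,J)$ for every $w \in N(I)$, and symmetrically for $N(J)$; hence $I, J$ are MMD and $IJ \in E(G(R)_{SR})$. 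The main obstacle is the bookkeeping at the boundary $n = 2$, and the verification that the witness $K$ in the adjacent case is a genuine non-trivial ideal, i.e.\ $\emptyset \neq \{i\} \neq [n]$, which is precisely where the hypothesis $n \geq 2$ enters.
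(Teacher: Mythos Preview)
Your proof is correct and follows the same strategy as the paper: use the complement $I^c$ to exhibit an MMD partner for every vertex in part~(1), and in the adjacent case of part~(2) produce a neighbour of $I$ at distance~$2$ from $J$ to contradict MMD. The only difference is the choice of that witness---the paper uses $J^{c}$ itself (observing that $I\neq J$ and $I\sim J$ force $I\sim J^{c}$ or $J\sim I^{c}$), whereas you use a singleton-support ideal; you are also more explicit about the diameter bound and the degenerate $n=2$ case, which the paper glosses over.
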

\begin{proof}
{
$1)$ For every $I=I_{1}\times\cdots \times I_{n}\in V(G(R))$, since $I\cap I^{c}= \emptyset$, we deduce that $d(I,I^{c})=2=diam(G(R))$. Thus $I$ and $I^{c}$ are mutually maximally distant and so $I\in V(G(R)_{SR})$ i.e., $V(G(R)_{SR})=V(G(R))$.\\
$2)$ First suppose that  $IJ \notin E(G(R))$. Since $d(I,J)=2$, obviously $IJ\in E(G(R)_{SR})$.\\
Conversely, suppose that $IJ \in  E(G(R)_{SR}$, for some $I, J \in V(G(R)_{SR})$. If $I\sim J$, then since $I\neq J$, we have $I\sim J^{c}$ or $J\sim I^{c}$. Thus  $d_{G(R)}(J,J^{c})=2 > 1=d(I,J)$ or $d_{G(R)}(I,I^{c})=2 > 1= d(I,J)$, and so $I, J$ are not mutually maximally distant, a contradiction. This completes the proof.
}
\end{proof}

Now, we have the following immediate corollary.
\begin{cor}\label{cor1}
Let  $n\geq 2$ be a positive integer and $R\cong \prod_{i=1}^{n}\mathbb{F}_{i}$, where $\mathbb{F}_{i}$ is a field for every $1\leq i \leq n$. Then $G(R)_{SR}=\overline{G(R)}$.
\end{cor}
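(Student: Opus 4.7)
The plan is to deduce the corollary directly by combining the two parts of Lemma \ref{lemma2g}. Since the corollary asserts equality of two graphs, I would verify equality of vertex sets first and then equality of edge sets, which is exactly what the two parts of the lemma supply.

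First, I would invoke part (1) of Lemma \ref{lemma2g}, which gives $V(G(R)_{SR}) = V(G(R))$. By the very definition of the complement, $V(\overline{G(R)}) = V(G(R))$, so the vertex sets of $G(R)_{SR}$ and $\overline{G(R)}$ coincide without further argument.

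Next, I would handle the edges. For any two distinct vertices $I, J \in V(G(R)_{SR}) = V(G(R))$, part (2) of Lemma \ref{lemma2g} states that $IJ \in E(G(R)_{SR})$ if and only if $IJ \notin E(G(R))$. By the definition of the complement graph, the latter condition is equivalent to $IJ \in E(\overline{G(R)})$. Consequently the edge sets agree, and hence $G(R)_{SR} = \overline{G(R)}$.

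There is essentially no obstacle beyond bookkeeping: the corollary is advertised as immediate from Lemma \ref{lemma2g}. The only point that requires a brief mention is that part (1) guarantees the biconditional in part (2) applies to every pair of distinct vertices of $G(R)$, so no vertices are silently lost when passing to $G(R)_{SR}$.
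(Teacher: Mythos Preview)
Your argument is correct and matches the paper's approach exactly: the corollary is stated as an immediate consequence of Lemma~\ref{lemma2g}, and your write-up simply makes explicit the identification of vertex sets via part~(1) and of edge sets via part~(2). There is nothing to add.
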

The next example explains Corollary \ref{cor1} in case $n=3$.

\begin{example}
{{\rm
Suppose that $R\cong \prod_{i=1}^{3}\mathbb{F}_{i}$,  where $\mathbb{F}_{i}$ is a field for every $1\leq i \leq 3$. Thus $|V(G(R))|=6$. Let}\\
$$ V_{1}=\mathbb{F}_{1}\times \mathbb{F}_{2}\times 0, \,\,\,\,\  V_{2}=\mathbb{F}_{1}\times 0 \times \mathbb{F}_{3}, \,\,\,\,\   V_{3}= 0 \times \mathbb{F}_{2} \times \mathbb{F}_{3},  $$\\ 
$$   V_{4}=0 \times 0 \times \mathbb{F}_{3},\,\,\,\,\ V_{5}=0\times \mathbb{F}_{2}\times 0 , \,\,\,\,\ V_{6}=\mathbb{F}_{1}\times 0 \times 0$$.
\unitlength=1.5mm
\begin{center}
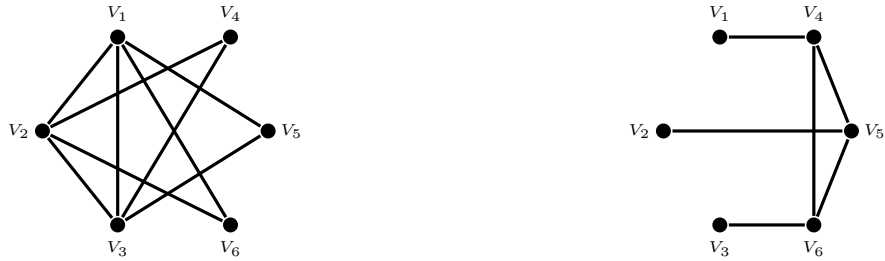
\begin{figure}[H]
 	
\begin{minipage}{.2\textwidth}
\begin{tikzpicture}
  [scale=0.25,every node/.style={circle,fill=black,inner sep=0pt},very thick]
  \node [label=above:{\tiny $V_{1}$},text width=2mm] (n1) at (0,5) {};
  \node [label=left:{\tiny $V_{2}$},text width=2mm] (n2) at (-4,0)  {};
  \node [label=below:{\tiny $V_{3}$},text width=2mm] (n3) at (0,-5)  {};
  \node [label=above:{\tiny $V_{4}$},text width=2mm]  (n4) at (6,5)  {};
  \node [label=right:{\tiny $V_{5}$},text width=2mm] (n5) at (8,0)  {};
  \node [label=below:{\tiny $V_{6}$},text width=2mm] (n6) at (6,-5)  {};
  \foreach \from/\to in {n1/n2,n1/n3,n1/n5,n1/n6,n2/n3,n2/n4,n2/n6,n3/n4,n3/n5}
    \draw (\from) -- (\to);
\end{tikzpicture}
\end{minipage}
\hspace{5cm}
\begin{minipage}{.25\textwidth}
\begin{tikzpicture}
  [scale=0.25,every node/.style={circle,fill=black,inner sep=0pt},very thick]
   \node [label=below:{\tiny $V_{6}$},text width=2mm] (n6) at (40,-5)  {};
   \node [label=left:{\tiny $V_{2}$},text width=2mm] (n2) at (32,0)  {};
   \node [label=below:{\tiny $V_{3}$},text width=2mm] (n3) at (35,-5)  {};
   \node [label=right:{\tiny $V_{5}$},text width=2mm] (n5) at (42,0)  {};
  \node [label=above:{\tiny $V_{1}$},text width=2mm] (n1) at (35,5)  {};
  \node [label=above:{\tiny $V_{4}$},text width=2mm] (n4) at (40,5)  {};
  \foreach \from/\to in {n1/n4,n2/n5,n3/n6,n4/n5,n4/n6,n5/n6}
    \draw (\from) -- (\to);
\end{tikzpicture}
\end{minipage}

 \caption{ $G(R)$ and $G(R)_{SR}$} \label{figure:fr1}
\end{figure}
\end{center}
}
\end{example}
Then  $\overline{G(R)}$  and  $G(R)_{SR}$ are identical.
\begin{lem}\label{dimprod2}
Let  $n\geq 2$ be a positive integer and $R\cong \prod_{i=1}^{n}\mathbb{F}_{i}$, where $\mathbb{F}_{i}$ is a field for every $1\leq i \leq n$. Then $\beta(G(R)_{SR})= 2^{n-1}-1$.
\end{lem}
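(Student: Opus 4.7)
The plan is to reduce the computation of $\beta(G(R)_{SR})$ to a well-known extremal-set-theoretic counting. By Corollary \ref{cor1}, $G(R)_{SR}=\overline{G(R)}$, so the independence number of $G(R)_{SR}$ equals the clique number $\omega(G(R))$. Thus it suffices to show that the largest clique of $G(R)$ has exactly $2^{n-1}-1$ vertices.

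To analyze cliques of $G(R)$, I would identify each non-trivial ideal $I=I_{1}\times\cdots\times I_{n}$ of $R\cong\prod_{i=1}^{n}\mathbb{F}_{i}$ with its \emph{support} $\mathrm{supp}(I)=\{i:I_{i}=\mathbb{F}_{i}\}$, which is a non-empty proper subset of $[n]=\{1,\dots,n\}$. Under this identification, two vertices $I,J$ are adjacent in $G(R)$ if and only if $I\cap J\neq 0$, which is equivalent to $\mathrm{supp}(I)\cap\mathrm{supp}(J)\neq\emptyset$. Hence cliques of $G(R)$ correspond bijectively to pairwise intersecting families of non-empty proper subsets of $[n]$.

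For the lower bound, I would exhibit an explicit clique. The family
\[
\mathcal{F}=\{S\subseteq[n]\,:\,1\in S,\ S\neq[n]\}
\]
consists of $2^{n-1}-1$ subsets, all of which share the element $1$, hence is pairwise intersecting. This yields a clique of $G(R)$ of size $2^{n-1}-1$, so $\omega(G(R))\geq 2^{n-1}-1$.

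For the matching upper bound, I would use the standard complementation trick. Partition the power set $2^{[n]}$ into the $2^{n-1}$ complementary pairs $\{S,[n]\setminus S\}$. In any intersecting family, at most one member of each such pair can appear, since $S\cap([n]\setminus S)=\emptyset$. One of these pairs, namely $\{\emptyset,[n]\}$, consists entirely of subsets that are \emph{not} vertices of $G(R)$; the remaining $2^{n-1}-1$ pairs each contribute at most one vertex to a clique. Hence $\omega(G(R))\leq 2^{n-1}-1$, completing the proof.

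There is no real obstacle here: once the translation via Corollary \ref{cor1} is in place, everything reduces to the textbook fact that an intersecting family in $2^{[n]}$ has size at most $2^{n-1}$, with a minor bookkeeping adjustment for the excluded pair $\{\emptyset,[n]\}$. The only subtlety worth being careful about is making sure the construction and the bound treat the exclusion of the trivial ideals $0$ and $R$ consistently, which the complementary pairing handles cleanly.
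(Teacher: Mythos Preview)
Your argument is correct. Invoking Corollary \ref{cor1} to replace $\beta(G(R)_{SR})$ by $\omega(G(R))$, then identifying vertices with their supports in $[n]$ and using the complementary-pair bound for intersecting families, is a clean and complete route to $2^{n-1}-1$.

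This is, however, a genuinely different argument from the paper's. The paper does not pass through $\omega(G(R))$ or invoke intersecting families abstractly; instead it works directly inside $G(R)_{SR}$, partitioning $V(G(R)_{SR})$ into the layers $A_i=\{I:NZC(I)=i\}$, exhibiting the explicit independent set $S'=\bigcup_{i\le \lfloor n/2\rfloor}A_i$ (with half of $A_{n/2}$ when $n$ is even), and summing binomial coefficients to obtain $|S'|=2^{n-1}-1$. Your approach is shorter and gives a fully rigorous upper bound via the pigeonhole on the $2^{n-1}-1$ complementary pairs of non-trivial ideals; the paper's maximality claim is comparatively informal on this point. On the other hand, the paper's layer-by-layer construction is what it reuses verbatim in the non-reduced setting (Lemmas \ref{dimprod6} and \ref{dimprod9}), where $G(R)_{SR}\neq\overline{G(R)}$ and your reduction to $\omega(G(R))$ is no longer available; so the paper's method, while heavier here, is the one that propagates to Section 3.
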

\begin{proof}
{By Lemma \ref{lemma2g}, $V(G(R)_{SR})=V(G(R))$. Let $I=I_{1}\times\cdots \times I_{n}\in V(G(R)_{SR})$ and $NZC(I)$ be the number of zero components in $I$. Obviously, $1 \leq NZC(I) \leq n-1$. Assume that \\
$A_{1}=\lbrace I \in V(G(R)_{SR})| NZC(I)=1\rbrace$,\\
$A_{2}=\lbrace I \in V(G(R)_{SR})| NZC(I)=2\rbrace$,\\
\vdots
\\
and $A_{n-1}=\lbrace I \in V(G(R)_{SR})| NZC(I)=n-1\rbrace$.\\
It is easily seen that $V(G(R))=\cup_{i=1}^{n-1}A_{i}$ and $A_{i}\cap A_{j}=\emptyset$, for every $i\neq j$ and so $\lbrace A_{1}, \ldots, A_{n-1}\rbrace$ is a partition of $V(G(R))$. Take the following facts into observation:\\
{\bf{Fact 1.}} Let $I,J\in A_i$, for some $1 \leq i\leq n-1$. If $I$ is not  adjacent to $J$  in $G(R)_{SR}$,  then by Lemma \ref{lemma2g}, $I\sim J$ in $G(R)$.\\
{\bf{Fact 2.}} Let $1 \leq i\leq [\dfrac{n}{2}]-1$, for even $n$ and $1 \leq i \leq [\dfrac{n}{2}]$, otherwise. Then $S_i\subseteq A_i$ is the largest subset of $A_i$ such that $IJ\notin E(G(R)_{SR}) $, for  every $I,J\in S_i$ (indeed, $S_i$ is the largest independent subset of $A_i$ in $G(R)_{SR}[A_i]$). For every $I,J\in A_{i}$, we have $I\cap J \neq 0$, so by Fact 1, $I$ is not  adjacent to $J$ in $G(R)_{SR}$. Thus $|S_i|=|A_{i}|= {n \choose i}$.\\
{\bf{Fact 3.}} Let $ t=\dfrac{n}{2}$, where $n$ is even. Then for every $I \in A_t$, $I$ is only adjacent to $I^{c}$. Thus $|S_t|=\frac{|A_{t}|}{2}= \dfrac{{n \choose t}}{2}$, where $S_t\subseteq A_t$ is the largest subset of $A_t$ such that $IJ\notin E(G(R)_{SR}) $, for  every $I,J\in S_t$.\\
Now let $S^{\prime}=\cup _{i=1}^{[t]}S_{i}$ and $[t] \leq i\leq n-1$. Then  there exists $J\in S^{\prime}$ such that $I\cap J = 0$, for every $I\in A_{i}$. Thus $S^{\prime} \cap (\cup_{i=t+1}^{n-1}A_i)=\emptyset$. Furthermore, $|S^{\prime}|={n \choose 1} +\cdots+ {n \choose t}= 2^{n-1}-1$, where $n$ is odd and $|S^{\prime}|={n \choose 1} +\cdots+ {n \choose t-1}+ \dfrac{{n \choose t}}{2}= 2^{n-1}-1$, where $n$ is even. Hence $S^{\prime}$ is the largest independent subset of $V(G(R)_{SR}$ in $G(R)_{SR}$ and so $\beta(G(R)_{SR})=|S^{\prime}|= 2^{n-1}-1$. 
}
\end{proof}
\begin{thm}\label{dimprod22}
Let  $n\geq 2$ be a positive integer and $R\cong \prod_{i=1}^{n}\mathbb{F}_{i}$, where $\mathbb{F}_{i}$ is a field for every $1\leq i \leq n$. Then
$sdim(G(R)_{SR})=2^{n}- 2^{n-1}-1$.
\end{thm}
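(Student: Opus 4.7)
The plan is to derive the formula directly from the three lemmas already established in this section, so essentially no new combinatorics is required. The chain of equalities will be
\[
sdim(G(R)) \;=\; \alpha(G(R)_{SR}) \;=\; |V(G(R)_{SR})| - \beta(G(R)_{SR}) \;=\; (2^n-2)-(2^{n-1}-1).
\]

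First I would invoke Lemma \ref{Oellermann} (the Oellermann--Peters-Fransen characterization) to reduce the computation of $sdim(G(R))$ to finding the vertex cover number $\alpha(G(R)_{SR})$. Then I would apply Gallai's identity (Lemma \ref{Gallai}) to the graph $G(R)_{SR}$, which converts the problem to determining the order of $G(R)_{SR}$ and its independence number $\beta(G(R)_{SR})$.

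Next, I would read off the order of $G(R)_{SR}$. By Lemma \ref{lemma2g}(1), $V(G(R)_{SR})=V(G(R))$, and the non-trivial ideals of $R\cong\prod_{i=1}^n\mathbb{F}_i$ correspond bijectively to proper non-empty subsets of $\{1,\dots,n\}$, so $|V(G(R)_{SR})|=2^n-2$. Combined with Lemma \ref{dimprod2}, which gives $\beta(G(R)_{SR})=2^{n-1}-1$, the calculation
\[
\alpha(G(R)_{SR})=(2^n-2)-(2^{n-1}-1)=2^n-2^{n-1}-1
\]
yields the claimed value of $sdim(G(R))$.

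Since every ingredient has already been proved, there is no real obstacle; the only point worth a sentence is justifying $|V(G(R))|=2^n-2$, which I would mention explicitly so the reader can see where the $2^n-2$ comes from. (I also note that the subscript ``$SR$'' in the theorem statement appears to be a typo: the intended quantity is $sdim(G(R))$, in line with the section's stated goal and with Lemma \ref{Oellermann}.)
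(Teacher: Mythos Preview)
Your argument is correct and matches the paper's own proof essentially verbatim: the paper simply cites Lemmas \ref{Oellermann} and \ref{dimprod2}, Gallai's theorem, and the fact that $|V(G(R)_{SR})|=2^{n}-2$. Your observation about the ``$SR$'' subscript being a typo is also accurate.
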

\begin{proof}
{The result follows from Lemmas \ref{Oellermann}, \ref{dimprod2},  Gallai's theorem and the fact that 
$|V(G(R)_{SR})|=2^{n}-2$.}
\end{proof}

\section{$G(R)_{SR}$ and $sdim(G(R))$; $R$ is non-reduced}
 

As it has been mentioned in Section $2$, we consider rings $R$ with finitely many ideals. Then there exists positive integer $m$ such that $R\cong R_1\times\cdots \times R_m$, where $(R_{i},m_{i})$ is a local Artinian ring, for all $1\leq i \leq m$. If every $m_{i}$ is principal, then by \cite[Propostion 8.8]{ati}, every $R_{i}$ is a principal ideal ring (PIR, for short) with finitely many ideals ({\bf we suppose throughout this section that $|I(R_i)|=n_i$, for $1\leq i\leq m$}). Moreover, ideals of every $R_{i}$ make an inclusion chain. 

In this section, we study the structure of $G(R)_{SR}$ and compute $sdim(G(R))$ for such rings $R$.

First, the case in which no fields appear in decomposition of $R$ is investigated.

\begin{remark}\label{dimfin1}
{\rm Suppose that  $R\cong \prod_{i=1}^{m}{R}_{i}$, where $R_{i}$ is a PIR  non-field for every $1\leq i\leq m$ and $m\geq 2$ is a positive integer. Assume that $I=I_{1}\times \cdots \times I_{m}$ and $J=J_{1}\times \cdots \times J_{m}$ are vertices of $G(R)$, where  $I_{i}, J_{i} \in R_{i}$, for every $1\leq i\leq m$. Define the relation  $\thicksim$ on  $V(G(R))$ as follows: $I\thicksim J$, whenever ``$I_{i}=0$ if and only if $ J_{i}=0$'', for every $1\leq i\leq m$.  It is easily seen that $\thicksim$ is an equivalence relation on $V(G(R))$. By $[I]$, we mean the equivalence class of $I$.
 Let $X_{1}$ and $X_{2}$ be two elements of $[X]$. Since $X_{1}\thicksim X_{2}$,  $X_{1}\cap X_{2}\neq 0$, i.e.,  $X_{1}$ and $X_{2}$ are adjacent. Moreover, $N(X_{1})=N(X_{2})$ and the number of these equivalence classes is $2^m-1$.}
\end{remark}
\begin{lem}\label{dimprod4}
Suppose that  $R\cong \prod_{i=1}^{m}{R}_{i}$, where $R_{i}$ is a PIR  non-field, for every $1\leq i\leq m$ and $m\geq 2$ is a positive integer. Then the following statements hold:\\
$1)$ $V(G(R)_{SR})=V(G(R))$.\\
$2)$ For every $I, J \in V(G(R)_{SR})$, if $[I]=[J]$, then  $IJ \in E(G(R)_{SR})$.\\
$3)$ For every $I, J\in V(G(R)_{SR})$, if $[I]\neq [J]$, then $IJ \in E(G(R)_{SR})$ if and only if $IJ \notin E(G(R))$.
\end{lem}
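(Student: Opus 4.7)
The plan rests on two facts. First, $G(R)$ has diameter $2$ (a standard result for intersection graphs of non-field commutative rings), so any two non-adjacent vertices sit at the maximum possible distance and are therefore automatically mutually maximally distant. Second, the complement ideal $I^c$ from Remark~\ref{dimfin} is a non-trivial ideal precisely when $I$ has at least one zero component, and in that case $I \cap I^c = 0$ forces $d(I, I^c) = 2$.

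The unifying observation behind (1) and (2) is a closed-twin phenomenon. Writing $I = I_1\times\cdots\times I_m$ and $K = K_1\times\cdots\times K_m$, the chain structure of each local PIR $R_l$ gives $I \cap K \neq 0$ if and only if some coordinate $l$ has both $I_l$ and $K_l$ nonzero. Consequently, if $[I] = [J]$ then $I$ and $J$ share the same adjacency condition, so $N[I] = N[J]$, and they are adjacent to each other. For such closed twins at distance $1$, mutual maximal distance is automatic: for any $w \in N(I)$ either $w = J$ (giving $d(J,w)=0$) or $w \in N(J)$ (giving $d(J,w)=1$), so $d(J,w) \le 1 = d(I,J)$, and symmetrically. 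This directly gives (2). It also gives (1) once one verifies that every class $[I]$ has at least two elements: for pattern $S = \{i : I_i \ne 0\}$, the count is $\prod_{i \in S}(n_i + 1)$, which is at least $2^{|S|} \ge 2$ when $S \ne [m]$, and at least $2^m - 1 \ge 3$ after subtracting $R$ when $S=[m]$.

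For (3), the ``if'' direction is immediate from diameter $2$: $I \cap J = 0$ puts $I$ and $J$ at the maximum distance $2$, so they are mutually maximally distant. For ``only if,'' I argue by contrapositive. Assume $I \sim J$ in $G(R)$ and $[I] \neq [J]$; I exhibit a witness violating mutual maximal distance. Since the zero patterns of $I$ and $J$ differ, after possibly swapping the two there is some index $k_0$ with $I_{k_0} = 0$ and $J_{k_0} \neq 0$. Then $I^c$ is a non-trivial ideal ($(I^c)_{k_0} = R_{k_0} \ne 0$, and $I \ne 0$ gives some $l$ with $(I^c)_l = 0$), $I \cap I^c = 0$ yields $d(I, I^c) = 2$, and $(I^c)_{k_0} \cap J_{k_0} = J_{k_0} \neq 0$ shows $I^c \sim J$, placing $I^c \in N(J)$. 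Thus $d(I, I^c) = 2 > 1 = d(I, J)$ violates the mutual-maximal-distance condition, so $I\sim J$ is impossible.

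The main subtlety is ensuring that $I^c$ is a legitimate vertex exactly when it is invoked; this is what forces the WLOG step in (3) to place the distinguishing zero coordinate in $I$ rather than in $J$. Everything else reduces to careful bookkeeping of zero/nonzero patterns in products of local chains, together with the diameter-$2$ fact which collapses all non-adjacent pairs into the mutually-maximally-distant camp at once.
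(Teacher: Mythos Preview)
Your argument is correct and for parts (2) and (3) essentially matches the paper's: the paper also invokes $N(I)=N(J)$ for (2), and for (3) uses $I^c$ (or $J^c$) as the neighbour witnessing that adjacent vertices in different classes fail the mutual-maximal-distance condition, with a case split on whether one of $I,J$ lies in $A_0$ that your WLOG step absorbs. Your handling of (1) is organized differently. The paper splits into $I\notin A_0$ (where $I^c$ is a genuine vertex and $d(I,I^c)=2=\operatorname{diam}(G(R))$) and $I\in A_0$ (where $I$ is adjacent to every other vertex, so any two members of $A_0$ are mutually maximally distant). You instead run a single closed-twin argument across \emph{all} classes, using the count $\prod_{i\in S}(n_i+1)\ge 2$ (resp.\ $\prod_i(n_i+1)-1\ge 3$ when $S=[m]$) to guarantee each class has at least two members; this is exactly where the non-field hypothesis $n_i\ge 1$ is spent. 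Your route is more uniform and avoids the $A_0$ case analysis, while the paper's reuses the $I^c$ machinery already needed for (3); both are short and valid.
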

\begin{proof}{
$1)$  It is enough to show that $V(G(R))\subseteq V(G(R)_{SR})$. Let $I=I_{1}\times\cdots \times I_{m}\in V(G(R))$, $NZC(I)$ be the number of zero components of $I$ and $A_{i}=\lbrace I \in V(G(R)| NZC(I)=i\rbrace$, for $0\leq i \leq m-1$. Then $V(G(R))=\cup _{i=0}^{m-1}A_{i}$.
Suppose that $I=I_{1}\times\cdots \times I_{m}\in V(G(R))\setminus A_{0}$. Since $d(I,I^{c})=2=diam(G(R))$, we conclude that $I, I^{c}$ are mutually maximally distant and so $I\in V(G(R)_{SR})$. Now, let $I \in A_{0}$. Then $d(I, V)= d(J,V)=1$, for every  $J \in A_{0}$ and $V\in V(G(R))\setminus \lbrace I, J \rbrace$. Thus $I, J$ are mutually maximally distant and so $I\in V(G(R)_{SR})$.  \\
$2)$ If $[I]=[J] \subset V(G(R)_{SR})$, then by Remark \ref{dimfin1}, $N(I)=N(J)$. Thus $I, J$ are mutually maximally distant and so $IJ \in E(G(R)_{SR})$. \\
$3)$ If $IJ \notin E(G(R))$, then clearly $IJ \in E(G(R)_{SR})$. To prove the other side, suppose to the contrary, $IJ \in E(G(R))$. Since $[I]\neq [J]$, if $[I]=A_{0}$ or $[J]=A_{0}$, then $d(I,I^{c})=2> d(I,J)=1$ or $d(J,J^{c})=2> d(I,J)=1$. Thus $I, J$ are not mutually maximally distant and so $IJ \notin E(G(R)_{SR})$, else since $[I]\neq [J]$, we conclude that $I\sim J^{c}$ or $J \sim I^{c}$. Thus $d(I,I^{c})=2> d(I,J)=1$ or $d(J,J^{c})=2> d(I,J)=1$. Hence $I, J$ are not mutually maximally distant and $IJ\notin E(G(R)_{SR})$, a contradiction.   
}
\end{proof}
\begin{lem}\label{dimprod5}
Suppose that  $R\cong \prod_{i=1}^{m}{R}_{i}$, where $R_{i}$ is a PIR non-field for every $1\leq i\leq m$ and $m\geq 2$ is a positive integer. Then $G(R)_{SR}= K_{\Pi_{i=1}^{m}(n_{i}+1)-1}+ H$, where $H$ is a connected graph. 
\end{lem}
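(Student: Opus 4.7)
The plan is to isolate the complete factor by singling out the $\sim$-class $A_0 = \{I_1 \times \cdots \times I_m \in V(G(R)) : I_i \neq 0 \text{ for all } i\}$, show it induces a clique of the right size, and then verify that this clique is a full connected component of $G(R)_{SR}$. A direct count gives $|A_0| = \prod_{i=1}^{m}(n_i+1) - 1$: for each coordinate $i$ there are $n_i + 1$ non-zero ideals of $R_i$ (the $n_i$ non-trivial ones together with $R_i$ itself), and we subtract $1$ to discard the forbidden ideal $R_1 \times \cdots \times R_m$. Since $A_0$ is a single $\sim$-class, Lemma \ref{dimprod4}(2) instantly yields $G(R)_{SR}[A_0] \cong K_{\prod(n_i+1)-1}$.

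Next I will show that no vertex of $A_0$ has a neighbor in $G(R)_{SR}$ lying outside $A_0$, which justifies writing $G(R)_{SR} = K_{\prod(n_i+1)-1} + H$ with $H := G(R)_{SR}[V(G(R)) \setminus A_0]$ as a disjoint union. For $I \in A_0$ and $J \notin A_0$ we have $[I] \neq [J]$, since $J$ has at least one zero component while $I$ has none. Computing $I \cap J$ coordinatewise and using that each $R_j$ is a local PIR whose ideals form a chain, $(I \cap J)_j = \min(I_j, J_j) \neq 0$ as soon as $J_j \neq 0$. Because $J$ is a non-trivial ideal of $R$ some $J_j$ is non-zero, so $I \cap J \neq 0$, i.e.\ $IJ \in E(G(R))$. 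Lemma \ref{dimprod4}(3) then forbids $IJ \in E(G(R)_{SR})$.

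The hardest remaining step is to show $H$ is connected, and I plan to do it via the singleton-support vertices. Let $S_i$ denote the $\sim$-class of ideals with support exactly $\{i\}$, that is, the ideals $0 \times \cdots \times I_i \times \cdots \times 0$ with $I_i \neq 0$; each $S_i$ is non-empty (e.g.\ contains $0 \times \cdots \times R_i \times \cdots \times 0$). For distinct $i, j$ any $K_i \in S_i$ and $K_j \in S_j$ have disjoint supports, so $K_i \cap K_j = 0$, and Lemma \ref{dimprod4}(3) makes them adjacent in $G(R)_{SR}$; combined with Lemma \ref{dimprod4}(2) giving each $S_i$ a clique, the union $\bigcup_{i=1}^{m} S_i$ is connected in $H$. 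For any remaining $J \in H$ with support $T$, the set $T$ is a non-empty proper subset of $\{1, \ldots, m\}$, so I can pick $i \in \{1, \ldots, m\} \setminus T$; then any $K \in S_i$ satisfies $K \cap J = 0$ and is therefore adjacent to $J$ in $G(R)_{SR}$ by Lemma \ref{dimprod4}(3). This attaches every vertex of $H$ to the singleton block, proving connectivity.

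The main obstacle I foresee is really only in the last step: one might worry that certain pairs of support patterns with nontrivial overlap (say $\{1,2\}$ and $\{1,3\}$) cannot be joined directly, but the singleton-support trick furnishes a common disjoint index and short paths $J \to K \in S_i \to K' \in S_{i'} \to J'$; once that observation is made, everything else is bookkeeping via Remark \ref{dimfin1} and the three parts of Lemma \ref{dimprod4}.
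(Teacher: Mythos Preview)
Your proof is correct and follows essentially the same strategy as the paper: isolate the class $A_0$ as a complete component via parts (2)--(3) of Lemma~\ref{dimprod4}, then establish connectivity of the remaining piece using the singleton-support vertices $0\times\cdots\times R_i\times\cdots\times 0$ as intermediaries. The paper organizes the connectivity argument as a case analysis on pairs $I,J$ (splitting on whether $I\subseteq J$, $J\subseteq I$, or neither) rather than your hub-and-spoke layout, but the underlying idea and the key vertices used are the same.
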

\begin{proof}{
 Using the notations in the proof of Lemma \ref{dimprod4}, $V(G(R)_{SR})=V(G(R))$. If $I, J\in A_{0}$, then $[I]=[J]$ and so $IJ\in E(G(R)_{SR})$. Thus the induced subgraph $G(R)[A_{0}]$ is complete. Also, by Lemma \ref{dimprod4}, if $I\in A_{0}$ and  $J\in V(G(R)_{SR})\setminus A_{0}$, then $IJ\notin E(G(R)_{SR})$. Furthermore, $|A_{0}|= \Pi_{i=1}^{m}(n_{i}+1)-1$. Thus $G(R)[A_{0}]= K_{\Pi_{i=1}^{m}(n_{i}+1)-1}$.\\
Next, we show that $H$ is a connected graph, where $V(H)=\cup _{i=1}^{m-1}A_{i}$.  
  We have to find a path between arbitrary vertices $I=I_1\times \cdots \times I_m$ and $J=J_1\times \cdots \times J_m$ in $V(H)$. To see this, we consider the following cases:\\
 \textbf{Case 1.} $[I]= [J]$.\\
 If $[I]= [J]$, then by Lemma \ref{dimprod4}, $I$ and $J$ are adjacent in $G(R)_{SR}$.\\
 \textbf{Case 2.} $[I]\neq[J]$.\\
 If $IJ\notin E(G(R))$, then by Lemma \ref{dimprod4}, $IJ\in E(G(R)_{SR})$. Thus suppose that $IJ\in E(G(R))$, so $I \cap J\neq 0$. If $I \subset J$ or $J \subset I$, then there exists $1 \leq i \leq n$ such that $I_{i}=J_{i}=0$, as $I,J \notin A_0$. In this case $I \sim V \sim J$, where $V= 0\times \cdots \times 0 \times R_{i}\times 0 \times \cdots \times 0$. Thus we may assume that $I \nsubseteq J$ and $J \nsubseteq I$. Hence there exist $1\leq i\neq j \leq m$ such that $I_{i}\neq 0 \neq J_{j}$ and $I_{j}= 0 = J_{i}$. In this case $I \sim V_{1}\sim V_{2} \sim J$, where $V_{1}= 0\times \cdots \times 0 \times R_{j}\times 0 \times \cdots \times 0$ and $V_{2}= 0\times \cdots \times 0 \times R_{i}\times 0 \times \cdots \times 0$.
 Thus $H$ is a connected graph. 
}
\end{proof}
The next example explains Lemma \ref{dimprod5} in case $n=2$.

\begin{example}
{{\rm
Suppose that $R\cong R_{1}\times R_{2}$,  where $R_{i}$ is a PIR non-field for $ i=1, 2$. Let $I(R_{i})=\lbrace I_{i1}, I_{i2}\rbrace$, for $i=1,2$. Thus $|V(G(R))|=14$. Suppose that}\\
$$ V_{1}=R_{1} \times 0, \,\,\,\,\  V_{2}= 0 \times R_{2}, \,\,\,\,\   V_{3}= I_{11} \times 0, \,\,\,\,\ V_{4}=I_{11} \times I_{21},$$\\
$$V_{5}=I_{11}\times I_{22}, \,\,\,\,\ V_{6}=I_{11}\times R_{2}, \,\,\,\,\ V_{7}= I_{12} \times 0, \,\,\,\,\  V_{8}=I_{12} \times I_{21}, \,\,\,\,\ V_{9}=I_{12}\times I_{22},$$\\ 
 $$V_{10}=I_{12}\times R_{2}, \,\,\,\,\ V_{11}= 0 \times I_{21}, \,\,\,\,\  V_{12}=0 \times I_{22}, \,\,\,\,\ V_{13}=R_{1}\times I_{21}, \,\,\,\,\ V_{14}=R_{1}\times I_{22}$$.
\unitlength=1.5mm
\begin{center}
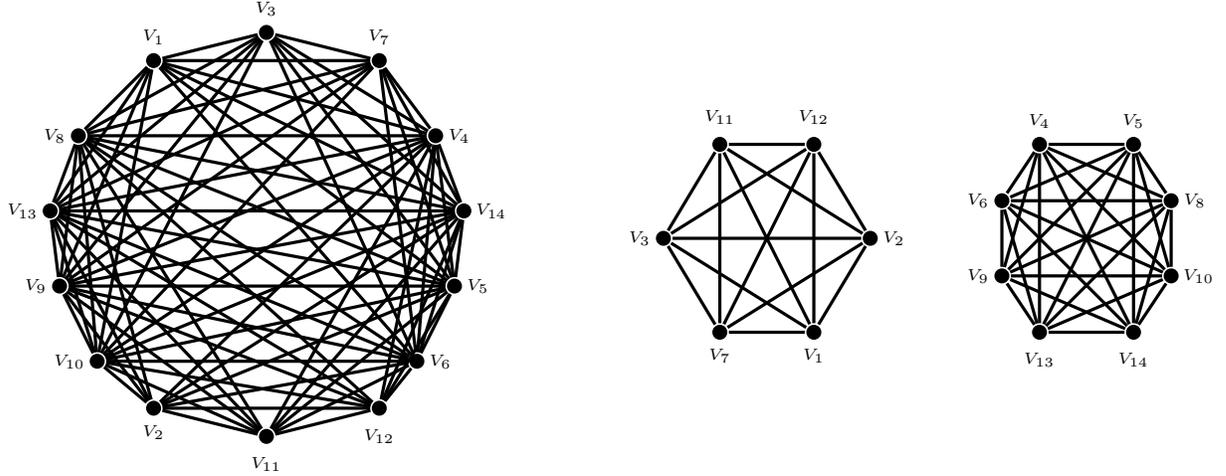
\begin{figure}[H]
 	
\begin{minipage}{.2\textwidth}
\begin{tikzpicture}
  [scale=0.25,every node/.style={circle,fill=black,inner sep=0pt},very thick]
  \node [label=above:{\tiny $V_{1}$},text width=2mm] (n1) at (-4,10) {};
  \node [label=above:{\tiny $V_{3}$},text width=2mm] (n3) at (2,11.5) {};
  \node [label=above:{\tiny $V_{7}$},text width=2mm] (n7) at (8,10) {};
  \node [label=left:{\tiny $V_{8}$},text width=2mm] (n8) at (-8,6) {};
  \node [label=left:{\tiny $V_{9}$},text width=2mm] (n9) at (-9,-2)  {};
  \node [label=left:{\tiny $V_{10}$},text width=2mm] (n10) at (-7,-6)  {};
  \node [label=right:{\tiny $V_{4}$},text width=2mm]  (n4) at (11,6)  {};
  \node [label=right:{\tiny $V_{5}$},text width=2mm] (n5) at (12,-2)  {};
  \node [label=right:{\tiny $V_{6}$},text width=2mm] (n6) at (10,-6)  {};
  \node [label=left:{\tiny $V_{13}$},text width=2mm] (n13) at (-9.5,2) {};
  \node [label=right:{\tiny $V_{14}$},text width=2mm] (n14) at (12.5,2) {};
  \node [label=below:{\tiny $V_{2}$},text width=2mm] (n2) at (-4,-8.5) {};
  \node [label=below:{\tiny $V_{11}$},text width=2mm] (n11) at (2,-10) {};
  \node [label=below:{\tiny $V_{12}$},text width=2mm] (n12) at (8,-8.5) {};
  \foreach \from/\to in {n1/n7,n1/n3,n1/n4,n1/n5,n1/n6,n1/n8,n1/n9,n1/n10,n1/n13,n1/n14, n2/n5,n2/n4,n2/n6,n2/n8,n2/n9,n2/n10,n2/n11,n2/n12,n2/n13,n2/n14,n3/n4,n3/n5,n3/n6,n3/n7,
n3/n8,n3/n9,n3/n10,n3/n13,n3/n14,n4/n5,n4/n6,n4/n7,n4/n8,n4/n9,n4/n10,n4/n11,n4/n12,n4/n13,n4/n14,
n5/n6,n5/n7,n5/n8,n5/n9,n5/n10,n5/n11,n5/n12,n5/n13,n5/n14,
n6/n7,n6/n8,n6/n9,n6/n10,n6/n11,n6/n12,n6/n13,n6/n14,n7/n8,n7/n9,n7/n10,n7/n13,n7/n14,
n8/n9,n8/n10,n8/n11,n8/n12,n8/n13,n8/n14,n9/n10,n9/n11,n9/n12,n9/n13,n9/n14,
n10/n11,n10/n12,n10/n13,n10/n14,n11/n12,n11/n13,n11/n14,n12/n13,n12/n14,n13/n14}
    \draw (\from) -- (\to);
\end{tikzpicture}
\end{minipage}
\hspace{5cm}
\begin{minipage}{.25\textwidth}
\begin{tikzpicture}
  [scale=0.25,every node/.style={circle,fill=black,inner sep=0pt},very thick]
   \node [label=below:{\tiny $V_{1}$},text width=2mm] (n1) at (40,-5)  {};
   \node [label=left:{\tiny $V_{3}$},text width=2mm] (n3) at (32,0)  {};
   \node [label=below:{\tiny $V_{7}$},text width=2mm] (n7) at (35,-5)  {};
   \node [label=right:{\tiny $V_{2}$},text width=2mm] (n2) at (43,0)  {};
  \node [label=above:{\tiny $V_{11}$},text width=2mm] (n11) at (35,5)  {};
  \node [label=above:{\tiny $V_{12}$},text width=2mm] (n12) at (40,5)  {};
  \node [label=above:{\tiny $V_{4}$},text width=2mm]  (n4) at (52,5)  {};
  \node [label=above:{\tiny $V_{5}$},text width=2mm] (n5) at (57,5)  {};
  \node [label=left:{\tiny $V_{6}$},text width=2mm] (n6) at (50,2)  {};
  \node [label=right:{\tiny $V_{8}$},text width=2mm] (n8) at (59,2) {};
  \node [label=left:{\tiny $V_{9}$},text width=2mm] (n9) at (50,-2)  {};
  \node [label=right:{\tiny $V_{10}$},text width=2mm] (n10) at (59,-2)  {};
  \node [label=below:{\tiny $V_{13}$},text width=2mm] (n13) at (52,-5) {};
  \node [label=below:{\tiny $V_{14}$},text width=2mm] (n14) at (57,-5) {};
  
  \foreach \from/\to in {n1/n2,n1/n3,n1/n7,n1/n11,n1/n12,n2/n3,n2/n7,n2/n11,n2/n12,n3/n7,n3/n11,n3/n12,n7/n11,n7/n12,n11/n12,n4/n5,n4/n6,n4/n8,n4/n9,n4/n10,
  n4/n13,n4/n14,n5/n6,n5/n8,n5/n9,n5/n10,n5/n13,n5/n14,n6/n8,n6/n9,n6/n10,n6/n13,n6/n14,n8/n9,n8/n10,n8/n13,n8/n14,n9/n10,n9/n13,n9/n14,
  n10/n13,n10/n14,n13/n14}
    \draw (\from) -- (\to);
\end{tikzpicture}
\end{minipage}

 \caption{ $G(R)$ and $G(R)_{SR}$} \label{figure:fr1}
\end{figure}
\end{center}
}
\end{example}
Then Figure \ref{figure:fr1} shows how  $G(R)_{SR}$ extract from $G(R)$.
\begin{lem}\label{dimprod6}
 
Suppose that  $R\cong \prod_{i=1}^{m}{R}_{i}$, where $R_{i}$ is a PIR  non-field for every $1\leq i\leq m$ and $m\geq 2$ is a positive integer. Then $\beta(G(R)_{SR})=2^{m-1}$.
\end{lem}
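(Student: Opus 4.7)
The plan is to reduce $\beta(G(R)_{SR})$ to a classical extremal question about intersecting set families. By Lemma \ref{dimprod5}, the strong resolving graph splits into two components with no edges between them: the clique on $A_{0}$, whose order is $N=\prod_{i=1}^{m}(n_{i}+1)-1$, and the connected graph $H$ on $V(H)=\bigcup_{i=1}^{m-1}A_{i}$. Any maximum independent set takes exactly one vertex from a clique, so $\beta(G(R)_{SR})=1+\beta(H)$, and it suffices to prove $\beta(H)=2^{m-1}-1$.

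First I would exploit the equivalence relation $\thicksim$ of Remark \ref{dimfin1}. For each non-empty proper subset $S\subsetneq\{1,\ldots,m\}$ let $[S]$ denote the class of ideals whose zero-component set is exactly $S$; these classes partition $V(H)$ into $2^{m}-2$ blocks, and by Lemma \ref{dimprod4}(2) each block is a clique in $G(R)_{SR}$. Hence any independent set of $H$ contains at most one representative from each class. For two distinct classes $[S]\neq[T]$, Lemma \ref{dimprod4}(3) combined with the chain structure of the ideal lattice of the local Artinian PIR $R_{i}$ (so $I_{i}\cap J_{i}\neq 0$ whenever both components are non-zero) shows that representatives of $[S]$ and $[T]$ are adjacent in $G(R)_{SR}$ iff $I\cap J=0$ iff $S\cup T=\{1,\ldots,m\}$. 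Thus $\beta(H)$ equals the maximum size of a family $\mathcal{F}$ of non-empty proper subsets of $\{1,\ldots,m\}$ with $S\cup T\neq\{1,\ldots,m\}$ for every distinct $S,T\in\mathcal{F}$.

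Taking complements converts this into the classical problem of bounding the largest intersecting family of non-empty proper subsets of $\{1,\ldots,m\}$. The pairing-by-complementation argument (each complementary pair contributes at most one element to an intersecting family) gives the bound $2^{m-1}$ for arbitrary subsets; once we forbid $\{1,\ldots,m\}$ (whose complement $\emptyset$ cannot appear in any intersecting family), the bound drops by exactly one. The family of all non-empty proper subsets containing a fixed element attains $2^{m-1}-1$, so $\beta(H)=2^{m-1}-1$ and therefore $\beta(G(R)_{SR})=2^{m-1}$.

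The main obstacle I foresee is the adjacency analysis between distinct classes, namely verifying that adjacency in $G(R)_{SR}$ between representatives of $[S]$ and $[T]$ truly depends only on the zero-pattern indices $S,T$ and not on the specific ideals chosen. This is where the local Artinian PIR hypothesis is essential, since it guarantees that ideals of each $R_{i}$ form a chain and therefore two non-zero components always intersect non-trivially. Once this observation is in hand, the remainder is a short combinatorial counting argument using the intersecting-family bound.
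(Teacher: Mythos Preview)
Your proof is correct and shares the paper's opening moves: the decomposition $G(R)_{SR}=K_{N}+H$ from Lemma~\ref{dimprod5}, the reduction to $\beta(H)=2^{m-1}-1$, and the observation (via Lemma~\ref{dimprod4}(2)) that any independent set in $H$ contains at most one vertex per $\thicksim$-class. Where you diverge is in the upper-bound argument. The paper builds an explicit candidate $S'$ by selecting one representative from each class contained in $A_{1}\cup\cdots\cup A_{\lfloor m/2\rfloor}$ (with the half-of-$A_{m/2}$ adjustment when $m$ is even), checks that no outside vertex can be appended, and evaluates $|S'|$ as a binomial sum. You instead pass to the zero-pattern indices, recast the problem as finding the largest family $\mathcal{F}$ of non-empty proper subsets of $[m]$ with $S\cup T\neq[m]$ for all $S,T\in\mathcal{F}$, complement to obtain an intersecting family of non-empty proper subsets, and invoke the classical complement-pairing bound to get $2^{m-1}-1$ directly. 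Your route is shorter and yields a genuine maximum (not merely maximal) bound in one stroke; the paper's route, by contrast, stays closer to the $A_i$-stratification used throughout and produces a concrete witnessing set, though its maximality step only shows $S'$ cannot be enlarged rather than that no larger independent set exists. Your identification of the PIR hypothesis as the reason adjacency depends only on the zero pattern is exactly the point that makes the reduction to set systems work.
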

\begin{proof}
{
By Lemma \ref{dimprod5}, $G(R)_{SR}= K_{\Pi_{i=1}^{m}(n_{i}+1)-1}+ H$, where $H=G(R)_{SR}[\cup_{i=1}^{m-1}A_{i}]$ is a connected graph. Thus $\beta(G(R)_{SR})=1+ \beta(H)$. We show that $\beta(H)=2^{m-1}-1$. 
Clearly, for every $I, J\in V(H)$ if $d_{G(R)}(I,J)=diam(G(R))$, then $IJ\in G(R)_{SR}$. Therefore, to find the largest independent set in $H$, we have to investigate cliques in $G(R)$.
Let $1 \leq i \leq [\dfrac{m}{2}]-1$, for even $m$ and $1 \leq i \leq [\dfrac{m}{2}]$, for odd $m$ and  $I, J \in A_{i}$. Then $I\cap J \neq 0$ and so $I$ and $J$ are adjacent, i.e.,  $G(R)[A_i]$ is a complete graph. Moreover, if  $I\in A_{i}$ and $J\in A_{j}$ with $1 \leq i\neq j \leq [\dfrac{m}{2}]-1$, for even $m$  and $1 \leq i\neq j \leq [\dfrac{m}{2}]$, for odd $m$, then $I\cap J \neq 0$ and so $I$ and $J$ are adjacent. The above arguments show that $G(R)[A]$ is a complete graph, if one let $A=\cup_{i=1}^{[\frac{m}{2}]}A_{i}$, for odd $m$ and $A=\cup_{i=1}^{[\frac{m}{2}]-1}A_{i}$, for even $m$. Now, let $t=\frac{m}{2}$, where $m$ is even. Then $I$ and $J$ are adjacent in $G(R)$, for every $I\in A_{t}$ and  $J\in A$. We note that if $I\in A_t$, then $I\cap V =0$ and so $IV\in E(G(R)_{SR})$, for every $V \in [I^c]$. This means that the largest independent set $P$ in $A_t$ contains exactly one element from either $[I]$ or  $[I^c]$. Moreover, $|P|= \dfrac{{m\choose t}}{2}$.\\
Now, we are ready to find the largest independent set in $H$. By Lemma \ref{dimprod4},  if $[I]=[J]$, then  $IJ \in E(G(R)_{SR})$, for all $I, J \in V(G(R)_{SR})$. Thus  only one element of the equivalence class $[I]$ can be contained in the largest independent set in $G(R)_{SR}[A]$, for every $I\in A$. On the other hand, the number of equivalence classes in the subgraph induced by every $A_i$ is ${m\choose i}$. Consider the independent set

 $$S= \lbrace I|\, I~is~ representative~ of~ equivalence~ class ~[I],~ for ~every ~I \in A \rbrace,$$

 in $H$. Let $S^{\prime}=S$, for odd $m$ and $S^{\prime}=S\cup P$, for even $m$. Then $S^{\prime}$ is an independent set in $H$. Finally, if $m$ is odd (or even), then  there exists $I\in S^{\prime}$ such that $I$ and $J$ are not adjacent in $G(R)$, for every $J\in V(H)\setminus A$ (or $J\in V(H)\setminus (A\cup A_{t})$). Hence $IJ\in E(G(R)_{SR})$ and so $S^{\prime}\cap (V(H)\setminus A)=\emptyset$ (or $S^{\prime}\cap V(H)\setminus (A\cup A_{t})=\emptyset$). Furthermore, $|S^{\prime}|={m \choose 1} +\cdots+ {m \choose t}= 2^{m-1}-1$, where $m$ is odd and $|S^{\prime}|={m \choose 1} +\cdots+ {m \choose t-1}+ \dfrac{{m \choose t}}{2}= 2^{m-1}-1$, where $m$ is even. Thus  $S^{\prime}$ is the largest independent subset of $V(H)$ of order $2^{m-1}-1$ and so $\beta(H)=|S^{\prime}|= 2^{m-1}-1$. 
}
\end{proof}

\begin{thm}\label{isomorphism3}
 
Suppose that  $R\cong \prod_{i=1}^{m}{R}_{i}$, where $R_{i}$ is a PIR  non-field for every $1\leq i\leq m$ and $m\geq 2$ is a positive integer. Then $sdim(G(R))=\Pi_{i=1}^{m}(n_{i}+2)-2^{m-1}-2$.
\end{thm}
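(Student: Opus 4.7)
The plan is to invoke the three ingredients already established in this section and combine them with Gallai's theorem. Concretely, I would reason as follows.

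First, by Lemma \ref{Oellermann}, computing $sdim(G(R))$ reduces to computing $\alpha(G(R)_{SR})$. Then, by Gallai's theorem (Lemma \ref{Gallai}),
$$\alpha(G(R)_{SR}) = |V(G(R)_{SR})| - \beta(G(R)_{SR}).$$
So the task splits into determining the vertex count of $G(R)_{SR}$ and invoking Lemma \ref{dimprod6} for the independence number.

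For the vertex count, Lemma \ref{dimprod4}(1) gives $V(G(R)_{SR}) = V(G(R))$. Since each $R_i$ is a PIR with $n_i$ non-trivial ideals (hence $n_i+2$ ideals in total counting $0$ and $R_i$), the total number of ideals of $R \cong \prod_{i=1}^{m} R_i$ is $\prod_{i=1}^{m}(n_i+2)$. Removing the two trivial ideals of $R$ itself yields
$$|V(G(R))| = \prod_{i=1}^{m}(n_i+2) - 2.$$

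Finally, Lemma \ref{dimprod6} supplies $\beta(G(R)_{SR}) = 2^{m-1}$. Plugging into the Gallai identity gives
$$\alpha(G(R)_{SR}) = \prod_{i=1}^{m}(n_i+2) - 2 - 2^{m-1},$$
and the conclusion $sdim(G(R)) = \prod_{i=1}^{m}(n_i+2) - 2^{m-1} - 2$ follows at once from Lemma \ref{Oellermann}. There is no real obstacle here since all the heavy lifting—identifying which pairs are mutually maximally distant, building the independent set $S'$ of size $2^{m-1}-1$ inside $H$, and adding the vertex contributed by the complete join factor $K_{\prod(n_i+1)-1}$—was already carried out in the preceding lemmas; the theorem is a bookkeeping corollary.
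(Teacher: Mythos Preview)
Your proposal is correct and mirrors the paper's own proof exactly: invoke Lemma~\ref{dimprod6} for $\beta(G(R)_{SR})=2^{m-1}$, note that $|V(G(R)_{SR})|=\prod_{i=1}^{m}(n_i+2)-2$ via Lemma~\ref{dimprod4}(1), and combine Gallai's theorem with Lemma~\ref{Oellermann}. The only difference is that you spell out the vertex-count computation in slightly more detail than the paper does.
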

\begin{proof}
{ By Lemma \ref{dimprod6}, $\beta(G(R)_{SR})=2^{m-1}$.  Since
$|V(In(R)_{SR})|=\Pi_{i=1}^{m}(n_{i}+2)-2$, Gallai$^{^,}$s theorem and Lemma \ref{Oellermann} show that $sdim(G(R))=|V(G(R)_{SR})|-\beta(G(R)_{SR}) =\Pi_{i=1}^{m}(n_{i}+2)-2^{m-1}-2$.
 }
\end{proof}

Finally, we investigate $sdim(G(R))$, where both of fields and non-fields appear in decomposition of $R$.

\begin{lem}\label{dimprod7}
Let  $R\cong S\times T$ such that $S= \prod_{i=1}^{m}{R}_{i}$, $T=\prod_{j=1}^{n}\mathbb{F}_{j}$, where $R_{i}$ is a PIR  non-field for every $1\leq i\leq m$, $\mathbb{F}_{j}$ is a field  for every $1\leq j\leq n$ and $m,n\geq 1$ are positive integers. Then the following statements hold:\\
$1)$ $V(G(R)_{SR})=V(G(R))$.\\
$2)$ For every $I, J \in V(G(R)_{SR})$, if $[I]=[J]$, then  $IJ \in E(G(R)_{SR})$.\\
$3)$ For every $I, J\in V(G(R)_{SR})$, if $[I]\neq [J]$, then $IJ \in E(G(R)_{SR})$ if and only if $IJ \notin E(G(R))$.
\end{lem}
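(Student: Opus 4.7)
The plan is to follow the proof of Lemma \ref{dimprod4} essentially verbatim, after first extending the equivalence relation $\thicksim$ and the complement $I^{c}$ from Remark \ref{dimfin1} to the mixed decomposition $R=S\times T$ in the obvious coordinatewise manner. In a field coordinate the class is trivially forced, since the component is either $0$ or $\mathbb{F}_{k}$. The key fact that must be preserved is that adjacency in $G(R)$ depends only on the zero-pattern: two non-zero ideals intersect non-trivially iff some coordinate has both components non-zero, because the intersection of two non-zero ideals in a PIR (whose ideals form a chain) is non-zero, as is $\mathbb{F}_{k}\cap \mathbb{F}_{k}$. Consequently $[I]=[J]$ still yields $I\sim J$ and $N(I)=N(J)$ in $G(R)$. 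I would also record at the outset that $diam(G(R))=2$, which follows from the fact that any two vertices $I,J$ with $I\cap J=0$ can be connected through a small ideal supported on one non-zero coordinate of each.

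Armed with these facts, I partition $V(G(R))=\bigcup_{i=0}^{m+n-1}A_{i}$ by $NZC$. For part $1)$, if $I\notin A_{0}$ then $I^{c}$ is a non-zero vertex with $d(I,I^{c})=2=diam(G(R))$, forcing $I$ and $I^{c}$ to be mutually maximally distant; if $I\in A_{0}$ then $I$ is adjacent to every other vertex, and selecting a second element $J\in A_{0}$ gives a pair in which every vertex lies at distance $1$ from both, again yielding mutual maximal distance.

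For part $2)$, when $[I]=[J]$ and $I\neq J$, the identity $N(I)=N(J)$ together with $d(I,J)=1$ trivially satisfies both mutual maximal distance inequalities, so $IJ\in E(G(R)_{SR})$. For part $3)$, the backward direction is immediate: $IJ\notin E(G(R))\Rightarrow d(I,J)=2=diam(G(R))\Rightarrow IJ\in E(G(R)_{SR})$. Conversely, if $[I]\neq[J]$ and $IJ\in E(G(R))$, the zero-patterns must differ in some coordinate $k$, say $I_{k}=0$ and $J_{k}\neq 0$; then $I^{c}$ is a non-zero vertex adjacent to $J$ (via coordinate $k$) with $d(I,I^{c})=2>1=d(I,J)$, so $I$ and $J$ fail to be mutually maximally distant.

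The only conceptual step is setting up the equivalence relation and complement for the mixed product and verifying that adjacency remains purely a zero-pattern condition; thereafter the proof is a direct transcription of Lemma \ref{dimprod4}, so I do not anticipate any obstacle beyond this bookkeeping.
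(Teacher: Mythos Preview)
Your plan is exactly what the paper does: its entire proof reads ``It is enough to apply a similar argument to that of Lemma \ref{dimprod4},'' and you have carried out that transcription, correctly observing that adjacency in $G(R)$ depends only on the zero-pattern because two non-zero ideals in a PIR whose ideals form a chain always meet non-trivially.

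One step of the transcription is not automatic, however. In part $1)$, for $I\in A_{0}$ you select a \emph{second} element $J\in A_{0}$. In Lemma \ref{dimprod4} this is harmless since $m\ge 2$ gives $|A_{0}|=\prod_{i=1}^{m}(n_{i}+1)-1\ge 3$. Here only $m\ge 1$ is assumed, and $|A_{0}|=\prod_{i=1}^{m}(n_{i}+1)-1$ equals $1$ precisely when $m=1$ and $n_{1}=1$ (for instance $R_{1}\cong\mathbb{Z}/p^{2}\mathbb{Z}$). In that situation the unique $I^{*}\in A_{0}$ is adjacent to every other vertex, but for any other vertex $J$ one has $J\notin A_{0}$, so $J^{c}$ is a genuine vertex distinct from $I^{*}$ with $d(J,J^{c})=2>1=d(I^{*},J)$; hence $I^{*}$ is mutually maximally distant from nothing and $I^{*}\notin V(G(R)_{SR})$. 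Thus part $1)$ as stated actually fails in this edge case, and both your argument and the paper's one-line reduction share this gap; you should flag the exception $m=1$, $n_{1}=1$ explicitly.
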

\begin{proof}{
It is enough to apply a similar argument to that of Lemma \ref{dimprod4}.
}
\end{proof}
\begin{lem}\label{dimprod8}
Let  $R\cong S\times T$ such that $S= \prod_{i=1}^{m}{R}_{i}$, $T=\prod_{j=1}^{n}\mathbb{F}_{j}$, where $R_{i}$ is a PIR  non-field for every $1\leq i\leq m$, $\mathbb{F}_{j}$ is a field  for every $1\leq j\leq n$ and $m,n\geq 1$ are positive integers. Then $G(R)_{SR}= K_{\Pi_{i=1}^{m}(n_{i}+1)2^{n}}+ H $, where $H$ is a connected graph.
\end{lem}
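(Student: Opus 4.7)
The plan is to follow the blueprint of Lemma \ref{dimprod5}, adapted to the mixed setting where both PIR non-field and field factors appear. Let $A_{0}$ be the set of ideals $I=I_{1}\times\cdots\times I_{m}\times F_{1}\times\cdots\times F_{n}$ in which every coordinate is non-zero (so $I_{k}\neq 0$ for each $k\leq m$ and $F_{j}=\mathbb{F}_{j}$ for each $j\leq n$), excluding $I=R$. Under the relation of Remark \ref{dimfin1}, $A_{0}$ is a single equivalence class, namely the class of ideals with full ``support''.

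I would first invoke Lemma \ref{dimprod7}(1) to identify $V(G(R)_{SR})$ with $V(G(R))$. Then Lemma \ref{dimprod7}(2) yields that $A_{0}$ induces a clique in $G(R)_{SR}$, while Lemma \ref{dimprod7}(3) detaches this clique from the remainder of the graph: for $I\in A_{0}$ and $J\in V(G(R)_{SR})\setminus A_{0}$ the classes $[I]$ and $[J]$ differ, and since every coordinate of $I$ is non-zero while $J$ has at least one non-zero coordinate, the chain structure of each local factor (together with the straightforward behaviour of field coordinates) forces $I_{\ell}\cap J_{\ell}\neq 0$ for some $\ell$, so $I\cap J\neq 0$. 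Hence $IJ\in E(G(R))$ and Lemma \ref{dimprod7}(3) gives $IJ\notin E(G(R)_{SR})$. The cardinality of $A_{0}$ then follows from a direct product count over the admissible coordinate choices, yielding the subscript in the statement.

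The main work is to verify that $H:=G(R)_{SR}[V(G(R)_{SR})\setminus A_{0}]$ is connected. Given $I,J\in V(H)$, the easy cases (equal classes, or disjoint supports) already produce an edge in $H$ via Lemma \ref{dimprod7}. Otherwise, pick indices $\ell,\ell'\in[m+n]$ with $I_{\ell}=0$ and $J_{\ell'}=0$, which exist since $I,J\notin A_{0}$. Let $W_{\ell}$ (respectively $W_{\ell'}$) denote the ``single coordinate'' ideal whose $\ell$-th entry is $R_{\ell}$ or $\mathbb{F}_{\ell-m}$ (as appropriate) and all other entries are zero. Both lie in $V(H)$ because $m+n\geq 2$, and Lemma \ref{dimprod7}(3) forces $IW_{\ell}, JW_{\ell'}\in E(G(R)_{SR})$. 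If $\ell\neq\ell'$, then $W_{\ell}$ and $W_{\ell'}$ have disjoint supports and hence $W_{\ell}W_{\ell'}\in E(G(R)_{SR})$ as well, producing a path of length $3$ from $I$ to $J$; if $\ell=\ell'$, the two bridge vertices coincide and we obtain a path of length $2$.

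The main obstacle is this connectedness argument for $H$: one must verify in each sub-case that the chosen bridging vertices lie outside $A_{0}$ and in distinct equivalence classes from $I$ and $J$, so that part~(3) of Lemma \ref{dimprod7} applies cleanly. The clique count and the separation of $A_{0}$ from its complement are then formal consequences of Lemma \ref{dimprod7}.
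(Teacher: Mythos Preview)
Your proposal is correct and follows essentially the same approach as the paper: invoke Lemma~\ref{dimprod7} to identify the vertex set and show that $A_{0}$ is a clique disconnected from the rest, then establish connectedness of $H$ via single-coordinate bridge ideals, exactly as in the proof of Lemma~\ref{dimprod5} to which the paper explicitly defers. Your connectedness argument (pick any zero coordinate of $I$ and any zero coordinate of $J$, then bridge through $W_{\ell}$ and $W_{\ell'}$) is a mild streamlining of the containment/incomparability case split used in Lemma~\ref{dimprod5}, but the underlying idea is identical.
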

\begin{proof}{
 By Lemma \ref{dimprod7}, $V(G(R))=V(G(R)_{SR})$. Also, since for every $I, J\in A_{0}$, $[I]=[J]$ so $IJ\in E(G(R)_{SR})$. Thus induced subgraph $G(R)[A_{0}]$ is a complete graph. Also, by Lemma \ref{dimprod7}, for every $I\in A_{0}$ and for every $J\in V(G(R)_{SR})\setminus A_{0}$, $IJ\notin E(G(R)_{SR})$. Furthermore, $|A_{0}|= \Pi_{i=1}^{m}(n_{i}+1)2^{n}$. Thus $G(R)[A_{0}]= K_{\Pi_{i=1}^{m}(n_{i}+1)}2^{n}$.\\ To complete the proof, it is enough to apply a similar argument to that of Lemma \ref{dimprod5}.
}
\end{proof}
\begin{lem}\label{dimprod9}
 Let  $R\cong S\times T$ such that $S= \prod_{i=1}^{m}{R}_{i}$, $T=\prod_{j=1}^{n}\mathbb{F}_{j}$, where $R_{i}$ is a PIR  non-field for every $1\leq i\leq m$, $\mathbb{F}_{j}$ is a field  for every $1\leq j\leq n$ and $m,n\geq 1$ are positive integers. Then $\beta(G(R)_{SR})=2^{m+n-1}$.
\end{lem}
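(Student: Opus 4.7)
The plan is to imitate the strategy of Lemma \ref{dimprod6} in the mixed setting. By Lemma \ref{dimprod8} the graph $G(R)_{SR}$ decomposes as a disjoint union of the clique $K$ induced on $A_0$ and a connected graph $H$ on $V(G(R)) \setminus A_0$. Since the independence number is additive over disjoint unions and $\beta(K)=1$, we obtain $\beta(G(R)_{SR}) = 1 + \beta(H)$, so it suffices to prove that $\beta(H) = 2^{m+n-1}-1$.

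Set $N = m+n$ and index the factors of $R$ by $[N]$. Assign to each $I \in V(H)$ its support $T_I \subseteq [N]$, the set of coordinates at which $I$ has a non-zero entry. The equivalence relation from Remark \ref{dimfin1} (extended to all $m+n$ factors) identifies vertices having the same support, so the equivalence classes partitioning $V(H)$ are in bijection with the non-empty proper subsets of $[N]$. Lemma \ref{dimprod7}(2) forces any independent set of $H$ to include at most one vertex per class; Lemma \ref{dimprod7}(3) says that representatives of distinct classes $[I], [J]$ are \emph{non}-adjacent in $G(R)_{SR}$ exactly when $I,J$ are adjacent in $G(R)$, i.e., when $I \cap J \neq 0$. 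Because each $R_i$ is a chain ring and each $\mathbb{F}_j$ a field, any two non-zero ideals of a single factor meet non-trivially, so $I \cap J \neq 0$ if and only if $T_I \cap T_J \neq \emptyset$. Hence the independent sets of $H$ correspond bijectively to intersecting families of non-empty proper subsets of $[N]$ (one representative chosen per class).

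The extremal question is now standard: pairing each non-empty proper subset $A \subsetneq [N]$ with its complement $A^c$ partitions the $2^N-2$ such subsets into $2^{N-1}-1$ pairs, and an intersecting family contains at most one member of each pair, which yields the upper bound. Equality is attained by the family of all proper subsets of $[N]$ containing a fixed element, so $\beta(H) = 2^{m+n-1}-1$ and $\beta(G(R)_{SR}) = 2^{m+n-1}$. The one genuinely ring-theoretic step, beyond what is already packaged in Lemma \ref{dimprod8}, is the passage from ``ideals meeting non-trivially'' to ``supports intersecting''; once this reduction is in place, the remainder is the combinatorics of intersecting families and mirrors the end of the proof of Lemma \ref{dimprod6}, the only bookkeeping hazard being the handling of the middle layer when $N$ is even (neatly sidestepped by the ``fixed element'' construction above).
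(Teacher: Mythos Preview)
Your proof is correct and follows the same route as the paper: use Lemma~\ref{dimprod8} to split $G(R)_{SR}$ into the clique on $A_0$ and the connected graph $H$, then (via Lemma~\ref{dimprod7}) reduce $\beta(H)$ to the size of a largest intersecting family of non-empty proper subsets of $\{1,\dots,m+n\}$, which is $2^{m+n-1}-1$. The only cosmetic difference is in the explicit witness for the lower bound: the paper (by reference to the argument of Lemma~\ref{dimprod6}) picks one representative per equivalence class from the layers $A_1,\dots,A_{\lfloor(m+n)/2\rfloor}$ and sums binomial coefficients, whereas you use the star of all proper supports containing a fixed coordinate---your complement-pairing upper bound is in fact a crisper justification of maximality than the paper's sketch.
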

\begin{proof}
{
 By Lemma \ref{dimprod8}, $G(R)_{SR}=  K_{\Pi_{i=1}^{m}(n_{i}+1)2^{n}} + H$, so $\beta(G(R)_{SR})= \beta(H)+ 1$. Also, by a similar argument to that of Lemma \ref{dimprod6}  and  case $(1)$ of Lemma \ref{dimprod7}, $S= \cup_{i=1}^{[\frac{m+n}{2}]}A_{i}$, where $m+n$ is odd and $S= \cup_{i=1}^{[\frac{m+n}{2}]-1}A_{i}$ union with half of the members of $A_{\frac{m+n}{2}}$ , where $m+n$ is even  is the largest independent subset of $V(H)$ and $|S|= 2^{m+n-1}-1 $. Hence $\beta(G(R)_{SR})=|S| + 1= 2^{m+n-1}$.
}
\end{proof}

We close this paper with the following result.

\begin{thm}\label{isomorphism4}
 Let  $R\cong S\times T$ such that $S= \prod_{i=1}^{m}{R}_{i}$, $T=\prod_{j=1}^{n}\mathbb{F}_{j}$, where $R_{i}$ is a PIR  non-field for every $1\leq i\leq m$, $\mathbb{F}_{j}$ is a field  for every $1\leq j\leq n$ and $m,n\geq 1$ are positive integers. Then $sdim(G(R))=\Pi_{i=1}^{m}(n_{i}+2)2^{n}- 2^{m+n-1}-2$.
\end{thm}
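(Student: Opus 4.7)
The proof I would give is essentially a one-line assembly from the lemmas established just above, exactly in the style of the proof of Theorem \ref{isomorphism3}. The plan is first to identify the vertex count $|V(G(R)_{SR})|$, then subtract the independence number supplied by Lemma \ref{dimprod9}, via Gallai's theorem, and finally invoke Oellermann's formula (Lemma \ref{Oellermann}) to pass from $\alpha(G(R)_{SR})$ to $sdim(G(R))$.

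For the vertex count: each $R_i$ has $n_i+2$ ideals in total (the $n_i$ nontrivial ones plus $0$ and $R_i$), and each field $\mathbb{F}_j$ has exactly $2$ ideals. So $R$ has $\prod_{i=1}^m (n_i+2) \cdot 2^n$ ideals altogether. Discarding the two trivial ideals $0_R$ and $R$ yields
\[
|V(G(R))| \;=\; \prod_{i=1}^m (n_i+2)\cdot 2^n - 2.
\]
By part $(1)$ of Lemma \ref{dimprod7}, $V(G(R)_{SR}) = V(G(R))$, so $|V(G(R)_{SR})|$ equals the same quantity.

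Now I would combine the three ingredients. Lemma \ref{Oellermann} gives $sdim(G(R)) = \alpha(G(R)_{SR})$; Gallai's theorem (Lemma \ref{Gallai}) gives $\alpha(G(R)_{SR}) = |V(G(R)_{SR})| - \beta(G(R)_{SR})$; and Lemma \ref{dimprod9} gives $\beta(G(R)_{SR}) = 2^{m+n-1}$. Putting these together,
\[
sdim(G(R)) \;=\; \Bigl(\prod_{i=1}^m (n_i+2)\cdot 2^n - 2\Bigr) - 2^{m+n-1} \;=\; \prod_{i=1}^m (n_i+2)\cdot 2^n - 2^{m+n-1} - 2,
\]
which is exactly the claimed identity.

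There is essentially no obstacle in this final step: all of the structural work has been done in Lemmas \ref{dimprod7}, \ref{dimprod8} and \ref{dimprod9}. The only thing to be careful about is the vertex count, in particular remembering that each $R_i$ contributes $n_i+2$ (not $n_i$) ideal-choices and each $\mathbb{F}_j$ contributes $2$, and that the two trivial ideals of $R$ must be removed. Once those bookkeeping points are handled, the theorem follows immediately by arithmetic.
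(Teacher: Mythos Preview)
Your proof is correct and follows exactly the same route as the paper: invoke Lemma \ref{dimprod9} for $\beta(G(R)_{SR})$, combine it with Gallai's theorem (Lemma \ref{Gallai}) and Lemma \ref{Oellermann}, and subtract from the vertex count. In fact your justification of $|V(G(R)_{SR})|=\prod_{i=1}^m(n_i+2)\cdot 2^n-2$ via Lemma \ref{dimprod7}(1) is more explicit than the paper's, and it matches the theorem statement; the paper's own proof actually contains a typo, writing $(n_i+1)$ in place of $(n_i+2)$ in the vertex count.
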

\begin{proof}
{
By Lemma \ref{dimprod9}, $\beta(G(R)_{SR})=2^{m+n-1}$. Since
$|V(G(R)_{SR})|=\Pi_{i=1}^{m}(n_{i}+1)2^{n}-2$, Gallai$^{^,}$s theorem and Lemma \ref{Oellermann} show that $sdim(G(R))=|V(G(R)_{SR})|-\beta(G(R)_{SR}) =\Pi_{i=1}^{m}(n_{i}+1)2^{n}- 2^{m+n-1}-2$.
}
\end{proof}


{}

\end{document}